\newtheorem{example}{Example}
\newtheorem{theorem}{Theorem}[section]
\newtheorem{lemma}{Lemma}[section]
\newtheorem{definition}{Definition}[section]
\newtheorem{proof}{Proof}
\newtheorem{problem}{Problem}
\newcommand{\mat}[1]{\bm{#1}}
\newcommand{\ten}[1]{\bm{\mathcal{#1}}}
\newcommand{\kpr}[1]{\textsuperscript{\textcircled{#1}}}
\begin{document}

\begin{frontmatter}

\title{Tensor Network alternating linear scheme for MIMO Volterra system identification}


\author[HKU]{Kim Batselier}\ead{kim.batselier@eee.hku.hk},    
\author[HKU]{Zhongming Chen}\ead{zmchen@eee.hku.hk},    
\author[HKU]{Ngai Wong}\ead{nwong@eee.hku.hk},               

\address[HKU]{The Department of Electrical and Electronic Engineering, The University of Hong Kong}  

\begin{keyword}                           
Volterra series; tensors; MIMO; identification methods; system identification
\end{keyword}                             

\begin{abstract}                          
This article introduces two Tensor Network-based iterative algorithms for the identification of high-order discrete-time nonlinear multiple-input multiple-output (MIMO) Volterra systems. The system identification problem is rewritten in terms of a Volterra tensor, which is never explicitly constructed, thus avoiding the curse of dimensionality. It is shown how each iteration of the two identification algorithms involves solving a linear system of low computational complexity. The proposed algorithms are guaranteed to monotonically converge and numerical stability is ensured through the use of orthogonal matrix factorizations. The performance and accuracy of the two identification algorithms are illustrated by numerical experiments, where accurate degree-10 MIMO Volterra models are identified in about 1 second in Matlab on a standard desktop pc.  
\end{abstract}

\end{frontmatter}

\section{Introduction}
Volterra series~\cite{wiener2013nonlinear} have been extensively studied and applied in applications like speech modeling~\cite{Mumolo1993}, loudspeaker linearization~\cite{Kajikawa2008}, nonlinear control~\cite{fj2012identification}, active noise control~\cite{Tan2001}, modeling of biological and physiological systems~\cite{korenberg1996identification}, nonlinear communication channel identification and equalization~\cite{cheng2001optimal}, distortion analysis~\cite{Wambacq1998} and many others. Their applicability has been limited however to ``weakly nonlinear systems", where the nonlinear effects play a non-negligible role but are dominated by the linear terms. Such limitation is not inherent to the Volterra series themselves, as they can represent a wide range of nonlinear dynamical systems, but is due to the exponentially growing number of Volterra kernel coefficients as the degree increases. Indeed, assuming a finite memory $M$, the $d$th-order response of a discrete-time single-input single-output (SISO) Volterra system is given by
\begin{align*}
y_d(t) &= \sum_{k_1,\ldots,k_d=0}^{M-1} h_d(k_1,\ldots,k_d)\,\prod_{i=1}^{d} u(t-k_i), 
\end{align*}
where $y(t),u(t)$ are the scalar output and input at time $t$ respectively and the $d$th-order Volterra kernel $h_d(k_1,\ldots,k_d)$ is described by $M^d$ numbers. For a multiple-input multiple-output (MIMO) Volterra system with $p$ inputs the situation gets even worse, where the $d$th-order Volterra kernel for one particular output is characterized by $(pM)^d$ numbers. This problem is also commonly known as the \emph{curse of dimensionality}. 

The paradigm used in this article to break this curse is \textbf{to trade storage for computation}. This means that all the Volterra coefficients are replaced by only a few numbers, from which all Volterra coefficients can be computed. This idea is not new, e.g. Volterra kernels have been expanded on orthonormal basis functions in order to reduce their complexity~\cite{Campello2004,Diouf2012}. Tensors (namely, multi-dimensional arrays that are generalizations of matrices to higher orders) are also suitable candidates for this purpose. In~\cite{FKB:12} both the canonical polyadic~\cite{harshman1970fpp,candecomp} and Tucker tensor decompositions~\cite{tuckerreview} were used. The canonical polyadic decomposition can suffer from instability however, and the determination of its rank is known to be a NP-hard problem\cite{Hastad1990}, which can be ill-posed \cite{Lim2008}. The main disadvantage of the Tucker decomposition of the Volterra kernels, which is in fact an expansion onto a set of orthonormal basis functions, is that it still suffers from an exponential complexity.

This motivates us to develop and introduce a new description of discrete-time MIMO Volterra systems in terms of particular Tensor Networks (TN)~\cite{Orus2013}. For the particular case of multiple-input-single-output (MISO) systems, these Tensor Networks will turn out to be Trains (TTs)~\cite{ivanTT}. A TN representation does not suffer from any instability or exponential complexity and can represent all Volterra kernels combined by $O(d(pM+1)r^2)$ elements, where $r$ is a to-be-determined number called the TN-rank. TNs were originally developed in the physics community. Of particular importance is the Density Matrix Renormalization Group (DMRG) algorithm~\cite{White1992}, which is an iterative algorithm originally developed for the determination of the ground state of a entangled multi-body quantum system. Its applicability however is not limited to problems in quantum physics, as demonstrated by recent interest in the scientific computing community~\cite{Oseledets2011,Holtz2012,Rohwedder2013}.

In this article, we adopt the DMRG method in the TN format for the identification of MIMO discrete-time Volterra systems. The contributions of this article are twofold:
\begin{enumerate}
\item we derive a new description of discrete-time MIMO Volterra systems using the TN format,
\item we derive two iterative MIMO Volterra identification algorithms that estimate all Volterra kernels in the TN format from given input-output data.
\end{enumerate}

In each step of the iterative identification a small linear system needs to be solved. The main computational tools are the singular value decomposition (SVD) and the QR decomposition~\cite{matrixcomputations}. These orthogonal matrix factorizations ensure the numerical stability of the methods~\cite{Holtz2012}. The first identification method, which is called the Alternating Linear Scheme (ALS) method, has the lowest computational complexity but assumes that the TN-ranks $r_k$'s are fixed. The second identification method, called the Modified Alternating Linear Scheme (MALS), removes this limitation and allows for the adaptive updating of the TN-ranks during the iterations. The TN-ranks are determined numerically by means of a SVD. This is reminiscent of the determination of the order of linear systems in subspace identification algorithms~\cite{katayama2005subspace}. Monotonic convergence of both the ALS and MALS methods under certain conditions is discussed in ~\cite{Rohwedder2013}.

The outline of this article is as follows. In Section \ref{sec:prelim} we give a brief overview of important tensor concepts, operations and properties. The MIMO Volterra TN framework is introduced in Section \ref{sec:VolterraTensor}. The two iterative identification algorithms are derived in Section \ref{sec:sysid} and applied on two examples in Section \ref{sec:experiments}. To our knowledge, this is the only time where MIMO Volterra systems of degree 10 were identified in about 1 second on a standard computer. Matlab/Octave implementations of our algorithms are freely available from \url{https://github.com/kbatseli/MVMALS}.

\section{Preliminaries}
\label{sec:prelim}
\subsection{Tensor basics}
\begin{figure}[t]
\centering
\includegraphics[width=2in]{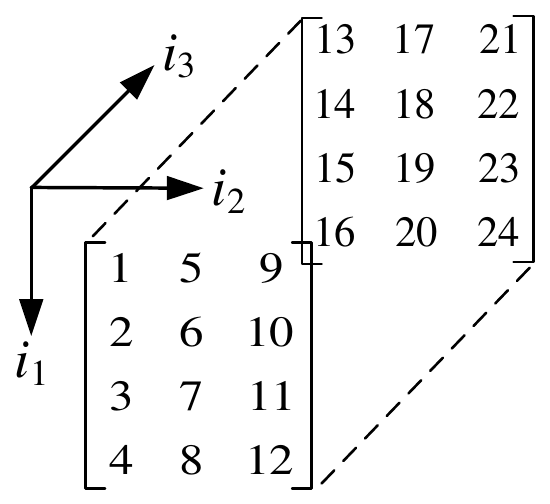}
\caption{A $4 \times 3 \times 2$ tensor where each entry is determined by three indices $i_1,i_2,i_3$.}
\label{fig:tensorexample}
\end{figure}
Tensors in this article are multi-dimensional arrays that generalize the notions of vectors and matrices to higher orders. A $d$-way or $d$th-order tensor is denoted $\ten{A} \in \mathbb{R}^{n_1 \times  n_2 \times \cdots \times n_d}$ and hence each of its entries $a_{i_1i_2\cdots i_d}$ is determined by $d$ indices. The numbers $n_1,n_2,\ldots,n_d$ are called the dimensions of the tensor. A tensor is cubical if all its dimensions are equal. A cubical tensor is symmetric when all its entries satisfy $a_{i_1i_2\cdots i_d}=a_{\pi(i_1,i_2,\ldots,i_d)}$, where $\pi(i_1,i_2,\ldots,i_d)$ is any permutation of the indices. An example $3$-way tensor with dimensions $4,3,2$ is shown in Fig.~\ref{fig:tensorexample}. For practical purposes, only real tensors are considered. We use boldface capital calligraphic letters $\ten{A},\ten{B},\ldots$ to denote tensors, boldface capital letters $\mat{A},\mat{B},\ldots$ to denote matrices, boldface letters $\mat{a},\mat{b},\ldots$ to denote vectors, and Roman letters $a,b,\ldots$ to denote scalars. The transpose of a matrix $\mat{A}$ or vector $\mat{a}$ are denoted by $\mat{A}^T$ and $\mat{a}^T$, respectively. The unit matrix of order $n$ is denoted $\mat{I}_n$.

We now give a brief description of some required tensor operations and properties. The generalization of the matrix-matrix multiplication to tensors involves a multiplication of a matrix with a $d$-way tensor along one of its $d$ possible modes (equiv. indices/axes).
\begin{definition}{\textbf{(Tensor $\bm{k}$-mode product)}}(~\cite[p.~460]{tensorreview})
The $k$-mode product $\ten{B}=\ten{A}\, {\times_k}\, \bm{U}$ of a tensor $\ten{A}\in\mathbb{R}^{n_1\times\cdots \times n_k\times\cdots\times n_d}$ with a matrix $\bm{U}\in\mathbb{R}^{p_k\times n_k}$ is defined by%
\begin{align}
b_{i_1\cdots i_{k-1} j i_{k+1} \cdots i_d}=\sum\limits_{i_k=1}^{n_k}  u_{j i_k} a_{i_1\cdots i_{k-1} i_k i_{k+1}\cdots i_d},%
\label{eqn:kmode}
\end{align}%
and $\ten{B}\in\mathbb{R}^{n_1\times\cdots \times n_{k-1}\times p_k\times n_{k+1}\times\cdots\times n_d}$.
\end{definition}
The following illustrative example rewrites the familiar matrix multiplication as a 1-mode and 2-mode product.
\begin{example}{\textbf{(Matrix multiplication as mode products)}}
For matrices $\bm{A},\bm{B},\bm{C}$ with matching dimensions we have that
\begin{align*}
\bm{A} \times_1 \bm{B} \times_2 \bm{C} &:= \bm{B}\,\bm{A}\, \bm{C}^T.
\end{align*}
\end{example}
An interesting observation is that the definition of the $k$-mode product also includes the multiplication of a tensor $\ten{A}$ with $d$ vectors. The following example highlights a very important case.
\begin{example}
Consider a symmetric $d$-way tensor $\ten{A}$ with dimensions $n$ and a vector $\mat{x} \in \mathbb{R}^{n}$. The multidimensional contraction of  $\ten{A}$ with $\mat{x}=\begin{pmatrix}x_1 & x_2 &\cdots & x_n\end{pmatrix}^T$ is the scalar
\begin{align}
\ten{A} \, \mat{x}^d &:= \ten{A} \times_1 \mat{x}^T \times_2 \mat{x}^T \times_3 \cdots \times_d \mat{x}^T,
\label{eq:symtenhompoly}
\end{align}
which is obtained as a homogeneous polynomial of degree $d$ in the variables $x_1,\ldots,x_n$.
\end{example}

The Kronecker product plays a crucial role in our description of MIMO Volterra systems.
\begin{definition}{\textbf{(Kronecker product)}}(~\cite[p.~461]{tensorreview})
If $\mat{B} \in \mathbb{R}^{m_1 \times m_2}$ and $\mat{C} \in \mathbb{R}^{n_1 \times n_2}$, then their Kronecker product $\mat{B} \otimes \mat{C}$ is an $m_1 \times m_2$ block matrix whose $(i_3,i_4)$th block is the $n_1 \times n_2$ matrix $b_{i_3i_4}\mat{C}$
\begin{equation}
\begin{pmatrix}
b_{11} & \cdots & b_{1n_1}\\
\vdots & \ddots & \vdots \\
b_{m_11} & \cdots &b_{m_1n_1}\\
\end{pmatrix} \otimes \mat{C} \;=\;
\begin{pmatrix}
b_{11}\mat{C} & \cdots & b_{1n_1}\mat{C}\\
\vdots & \ddots & \vdots \\
b_{m_11}\mat{C} & \cdots & b_{m_1n_1}\mat{C}\\
\end{pmatrix}.
\label{def:kron}
\end{equation}
\end{definition}
We use the notation $\mat{x}\kpr{d}:= \mat{x}\otimes \mat{x}\otimes \cdots \otimes \mat{x}$ for the $d$-times repeated Kronecker product. The mixed-product property of the Kronecker product states that if $\mat{A},\mat{B}, \mat{C}$ and $\mat{D}$ are matrices of such sizes that one can form the matrix products $\mat{AC}$ and $\mat{BD}$, then
\begin{align}
(\mat{A} \otimes \mat{B})\,(\mat{C} \otimes \mat{D}) = \mat{AC} \otimes \mat{BD}.
\label{eq:mixedproduct}
 \end{align}

\begin{definition}{\textbf{(Reshaping)}}(~\cite[p.~460]{tensorreview}) Reshaping is another often used tensor operation. The most common reshaping is the matricization, which reorders the entries of $\ten{A}$ into a matrix. We adopt the Matlab/Octave reshape operator ``reshape($\ten{A},[n_1,n_2,n_3 \cdots])$", which reshapes the tensor $\ten{A}$ into a tensor with dimensions $n_1,n_2,n_3,\ldots$. The total number of elements of $\ten{A}$ must be the same as $n_1\times n_2 \times n_3 \cdots$.
\end{definition}

\begin{example}
We illustrate the reshaping operator on the $4\times 3 \times 2$ tensor of Fig.~\ref{fig:tensorexample}
\begin{align*}
\textrm{reshape}(\ten{A},[4,6]) &= 
\begin{pmatrix}
1 & 5 & 9 & 13 & 17 & 21\\
2 & 6 & 10 & 14 &18 & 22\\
3 & 7 & 11 &15 &19 & 23\\
4 & 8 & 12 & 16&20 & 24
\end{pmatrix}.
\end{align*}
\end{example}


Probably the most important reshaping of a tensor is the vectorization.
\begin{definition}{\textbf{(Vectorization)}}(~\cite[p.~460]{tensorreview})
The vectorization of a tensor $\ten{A}$, denoted $\textrm{vec}(\ten{A})$, rearranges all its entries into one column vector. 
\end{definition}

\begin{example}
For the tensor in Fig.~\ref{fig:tensorexample}, we have 
\begin{align*}
\textrm{vec}(\ten{A}) \; =\textrm{reshape}(\ten{A},[24,1])=\; \begin{pmatrix}
1& 2& \cdots &24
\end{pmatrix}^T.
\end{align*}
\end{example}
The importance of the vectorization lies in the following equation
\begin{align}
\textrm{vec}(\ten{A} \times_1 \mat{U}_1 \times_2  \cdots \times_d \mat{U}_d) &= (\mat{U}_d \otimes \cdots \otimes \mat{U}_1)\textrm{vec}(\ten{A}).
\label{eq:allmodesmatrices}
\end{align}
Observe how the order in the Kronecker product is reversed with respect to the ordering of the mode products. Equation \eqref{eq:allmodesmatrices} allows us to rewrite \eqref{eq:symtenhompoly} as
\begin{align}
\ten{A} \, \mat{x}^{d} &= \textrm{vec}(\ten{A})^T \bm{x}\kpr{d},
\label{eq:allmodes}
\end{align}
which tells us how the $k$-mode products of a tensor $\ten{A}$ with a vector $\bm{x}$ can be computed. When $\ten{A}$ in \eqref{eq:allmodesmatrices} is also a matrix, we then obtain the following useful property
\begin{align}
\textrm{vec}(\mat{U}_1\,\mat{A}\,\mat{U}_2^T) &= (\mat{U}_2 \otimes \mat{U}_1)\, \textrm{vec}(\mat{A}).
\label{eq:vecofmatrixproduct}
\end{align}

\subsection{Tensor Train decomposition}
\begin{figure}[t]
\centering
\includegraphics[width=3in]{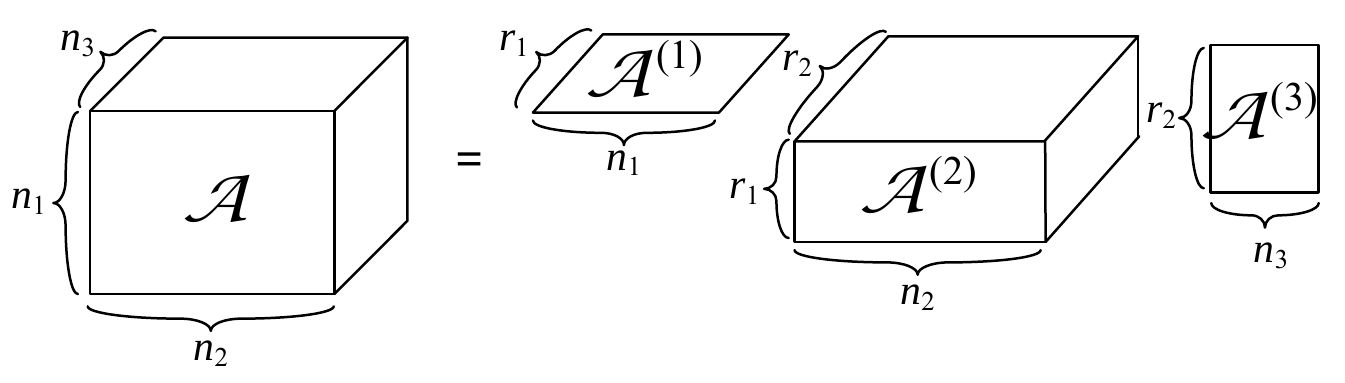}
\caption{The TT-cores of a 3-way tensor $\ten{A}$ are two matrices $\ten{A}^{(1)},\ten{A}^{(3)}$ and a $3$-way tensor $\ten{A}^{(2)}$.}
\label{fig:tt}
\end{figure}
The Tensor Network representation used in this article lies conceptually very close to the Tensor Train decomposition~\cite{ivanTT}. We therefore first discuss the Tensor Train representation. A TT-decomposition represents a $d$-way tensor $\ten{A}$ in terms of $d$ 3-way tensors $\ten{A}^{(1)}, \ldots,\ten{A}^{(d)}$, which are called the TT-cores. The $k$th TT-core has dimensions $r_{k-1},n_k,r_k$, where $r_{k-1},r_k$ are called the TT-ranks. Specifically, each entry of $\ten{A} \in \mathbb{R}^{n_1\times \cdots \times n_d}$ is determined by
\begin{equation}
a_{i_1 i_2 \cdots i_d} = \ten{A}^{(1)}_{i_1} \, \ten{A}^{(2)}_{i_2} \cdots \ten{A}^{(d)}_{i_d},
\label{eq:TTdecomp}
\end{equation}
where $\ten{A}^{(k)}_{i_k}$ is the $r_{k-1} \times r_k$ matrix obtained from specifying $i_k$. Since $a_{i_1 i_2 \cdots i_d}$ is a scalar, it immediately follows that $r_0=r_d=1$. The TT-decomposition is illustrated for a $3$-way tensor in Fig.~\ref{fig:tt}. Note that when all TT-ranks are equal to $r$, then the storage of a cubical $d$-way tensor with dimensions $n$ in the TT format needs $O(dnr^2)$ elements. Small TT-ranks therefore result in a significant reduction of required storage cost. In order to describe MIMO Volterra systems we will need to remove the constraint that $r_0=1$ and therefore obtain a slightly more general TN. Observe that the removal of this constraint implies that the left-hand side of \eqref{eq:TTdecomp} will then be a $r_0 \times 1$ vector. The following multidimensional contraction in the TN format turns out to be very important for Volterra systems.
\begin{lemma}(~\cite[p.~2309]{ivanTT})
Given a cubical tensor $\ten{A} \in \mathbb{R}^{n \times \cdots \times n}$ in the TT format and a vector $\mat{x} \in \mathbb{R}^n$. The multidimensional contraction $\ten{A} \, \mat{x}^{d}$ can then be computed as
\begin{align}
\ten{A} \mat{x}^{d}=(\ten{A}^{(1)} \times_2 \mat{x}^T) \, (\ten{A}^{(2)} \times_2 \mat{x}^T)  \cdots (\ten{A}^{(d)} \times_2 \mat{x}^T).
\label{eq:vecmatrixproduct}
\end{align}
\label{lemma:TTcontract}
 \end{lemma}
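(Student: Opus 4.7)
The proof strategy is to expand both sides as explicit scalar sums and then verify equality term by term, leveraging only the definition of the mode product and the TT factorization.

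First I would write
\begin{align*}
\ten{A}\,\mat{x}^{d} = \sum_{i_1,\ldots,i_d=1}^{n} a_{i_1 i_2 \cdots i_d}\, x_{i_1} x_{i_2}\cdots x_{i_d},
\end{align*}
which follows directly from \eqref{eq:symtenhompoly} and the definition \eqref{eqn:kmode} of the mode product (the symmetry hypothesis used in the earlier example is not needed for this scalar expansion, only the cubical shape is). Then I would substitute the TT representation \eqref{eq:TTdecomp} to rewrite each entry as the matrix product $\ten{A}^{(1)}_{i_1}\,\ten{A}^{(2)}_{i_2}\cdots \ten{A}^{(d)}_{i_d}$ of the $r_{k-1}\times r_k$ slice matrices.

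Next I would exploit the bilinearity of matrix multiplication to push the summations onto the appropriate cores. Because the index $i_k$ appears only in the $k$th matrix factor and in the scalar $x_{i_k}$, the nested sum factorizes as
\begin{align*}
\ten{A}\,\mat{x}^{d} = \prod_{k=1}^{d}\left(\sum_{i_k=1}^{n} x_{i_k}\, \ten{A}^{(k)}_{i_k}\right).
\end{align*}
It then remains to identify each parenthesised sum with $\ten{A}^{(k)}\times_2 \mat{x}^{T}$: applying \eqref{eqn:kmode} to the 3-way core $\ten{A}^{(k)}\in\mathbb{R}^{r_{k-1}\times n\times r_k}$ with the $1\times n$ matrix $\mat{x}^{T}$, the $(\alpha,\beta)$ entry of $\ten{A}^{(k)}\times_2 \mat{x}^{T}$ is $\sum_{i_k=1}^{n} x_{i_k}\,\ten{A}^{(k)}_{\alpha,i_k,\beta}$, which is exactly the $(\alpha,\beta)$ entry of $\sum_{i_k} x_{i_k}\ten{A}^{(k)}_{i_k}$.

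The only subtlety is keeping the index conventions straight: the 2-mode product contracts the middle mode of $\ten{A}^{(k)}$ against $\mat{x}^{T}$, yielding a matrix of size $r_{k-1}\times r_k$, so that the successive products in the right-hand side of the claimed identity are conformable (with $r_0=r_d=1$ in the pure TT case, or with the general boundary ranks used later in the MIMO extension). Beyond that bookkeeping, the argument is a routine verification and presents no genuine obstacle.
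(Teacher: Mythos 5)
The paper does not prove this lemma at all; it simply cites it from the Oseledets Tensor Train paper. Your argument is the standard proof of that cited fact and it is correct: the scalar expansion of $\ten{A}\,\mat{x}^{d}$, the substitution of the TT factorization \eqref{eq:TTdecomp}, the factorization of the nested sum into an ordered product $\bigl(\sum_{i_1} x_{i_1}\ten{A}^{(1)}_{i_1}\bigr)\cdots\bigl(\sum_{i_d} x_{i_d}\ten{A}^{(d)}_{i_d}\bigr)$ (valid by distributivity of matrix multiplication over addition, since each index appears in exactly one factor), and the identification of each factor with $\ten{A}^{(k)}\times_2\mat{x}^{T}$ are all sound, and your remark that symmetry is not needed is accurate.
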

 Again, in the MIMO case this contraction will result in a $r_0 \times 1$ vector where now each entry is the evaluation of a homogeneous polynomial. The numerical stability of the two identification algorithms described in this article relies on the TN-cores being either left or right orthogonal.
 \begin{definition}(\textbf{Left orthogonal and right orthogonal TN-cores})(~\cite[p.~A689]{Holtz2012})
 A TN-core $\ten{A}^{(i)}$ is left orthogonal if it can be reshaped into an $r_{i-1}n_i \times r_i$ matrix $\mat{A}$ for which
 \begin{align*}
 \mat{A}^T \, \mat{A} &= I_{r_i}
\end{align*}
applies. Similarly, a TN-core $\ten{A}^{(i)}$ is right orthogonal if it can be reshaped into an $r_{i-1}\times n_i r_i$ matrix $\mat{A}$ for which
 \begin{align*}
 \mat{A} \, \mat{A}^T &= I_{r_{i-1}}
\end{align*}
applies.
\end{definition}


\section{Tensor description of MIMO Volterra systems}
\label{sec:VolterraTensor}
To keep the notation simple, we first consider the following discrete-time SISO Volterra system of degree $d$, namely,
\begin{align}
\nonumber y(t) &= y_0(t) + y_1(t) + y_2(t) + \cdots + y_d(t),\\
&= h_0 + \sum_{i=1}^d \sum_{k_1,\ldots,k_i=0}^{M-1} h_i(k_1,\ldots,k_i)\,\prod_{j=1}^{i} u(t-k_j), 
\label{eq:SISOVolterra}
\end{align}
where $y(t),u(t)$ are the scalar output and input at time $t$ respectively, $M$ is the memory length and $h_i(k_1,\ldots,k_i)$ denotes the $i$th Volterra kernel. Previous work that uses tensors in Volterra series describes each Volterra kernel as a separate symmetric tensor~\cite{FKB:12}. Realizing from \eqref{eq:SISOVolterra} that $y(t)$ is a multivariate polynomial in $u(t),\ldots,u(t-M+1)$, we define the SISO Volterra tensor as follows.
\begin{definition} (SISO Volterra tensor)
Given a discrete-time SISO Volterra system of degree $d$ and memory $M$ as described in \eqref{eq:SISOVolterra}, then the $d$-way cubical Volterra tensor $\ten{V}$ of dimension $M+1$ is defined by
\begin{align*}
y(t) &= \ten{V} \, \mat{u}_t^d,
\end{align*}
where
\begin{align*}
\mat{u}_t &:= \begin{pmatrix}1 & u(t) & u(t-1) & \cdots & u(t-M+1) \end{pmatrix}^T \in \mathbb{R}^{M+1}.
\end{align*}
\label{def:SISOVolterraTensor}
\end{definition}
The extension of this definition to the MIMO case is straightforward. Indeed, the $i$th output $y_i(t)$ is then a multivariate polynomial of degree $d$ in $u_1(t),\ldots,u_1(t-M+1),u_2(t),\ldots,u_2(t-M+1),\ldots,u_p(t-M+1)$. This implies that the vector $\mat{u}_t$ in Definition~\ref{def:SISOVolterraTensor} needs to be extended with these additional inputs, which results in an increase in the dimension of the corresponding Volterra tensor. In addition, the order of the Volterra tensor is incremented to accommodate for the multiple number of outputs. 
\begin{definition} (MIMO Volterra tensor)
Given a discrete time $p$-input $l$-output Volterra system of degree $d$ and memory $M$, then the $d+1$-way Volterra tensor $\ten{V}$ of dimensions $l \times pM+1 \times \cdots \times pM+1$ is defined by
\begin{align}
\mat{y}(t) &:= \begin{pmatrix} y_1(t) \\ y_2(t) \\ \vdots \\ y_l(t) \end{pmatrix} = \ten{V} \times_2 \mat{u}_t^T \times_3 \mat{u}_t^T \cdots \times_{d+1} \mat{u}_t^T,
\label{eq:MIMOVolterraTensor}
\end{align}
where $\mat{u}_t$ is the $(pM+1)\times 1$ vector with entries $1,u_1(t),u_2(t),\ldots,u_p(t),\ldots, u_1(t-M+1),u_2(t-M+1),\ldots,u_p(t-M+1)$.
\label{def:MIMOVolterraTensor}
\end{definition}

The MIMO Volterra tensor consists of $l\,(pM+1)^d$ entries, which can quickly become practically infeasible to store as $M$ and $d$ grow. We therefore propose to store the Volterra tensor $\ten{V}$ in the TN format $\ten{V}^{(1)},\ldots,\ten{V}^{(d)}$, where the first TN-core $\ten{V}^{(1)}$ has dimensions $l \times pM+1 \times r_1$. The other TN-cores have sizes $r_{i-1} \times pM+1 \times r_i$, with $r_d=1$ for the last core. The TN format reduces to a TT for the MISO case $(l=1)$. This change in representation reduces the storage requirement to $O((d-1)(pM+1)r^2+(pM+1)lr)$. Lemma \ref{lemma:TTcontract} then immediately tells us how to simulate the output samples at time $t$ as
\begin{align}
\mat{y}(t) &= (\ten{V}^{(1)} \times_2 \mat{u}_t^T) \, (\ten{V}^{(2)} \times_2 \mat{u}_t^T)  \cdots (\ten{V}^{(d)} \times_2 \mat{u}_t^T),
\label{eq:simTT}
\end{align}
with a computational complexity of $O(d(pM+1)r+dr^3)$. In \cite{batselier2016iccad} a much faster simulation complexity of $O(dRN\log{N})$ for computing $N$ samples is obtained by using a symmetric polyadic representation for each of the Volterra kernels separately. However, the method only works for the SISO case and the identification of such a representation can be problematic, since computation of the canonical rank $R$ is an NP-hard problem\cite{Hastad1990}, which can be ill-posed \cite{Lim2008}.

\section{MIMO Volterra system identification}
\label{sec:sysid}
In what follows we always consider the MIMO case. Observe now that we can rewrite \eqref{eq:MIMOVolterraTensor} as
\begin{align}
\mat{y}(t)^T &= (\mat{u}_t\kpr{d})^T \, (\ten{V}_{(1)})^T,
\label{eq:onerow}
\end{align}
where $\ten{V}_{(1)}$ is the Volterra tensor reshaped into a $l \times (pM+1)^d$ matrix. Writing out \eqref{eq:onerow} for $t=0,1,\ldots,N-1$ leads to the following matrix equation
\begin{align}
\mat{Y} &= \mat{U}\, (\ten{V}_{(1)})^T,
\label{eq:bigsystem}
\end{align}
where
\begin{align*}
\mat{Y} &:= \begin{pmatrix} \mat{y}(0) & \mat{y}(1) & \mat{y}(2) & \cdots & \mat{y}(N-1) \end{pmatrix}^T,\\
\mat{U} &:= \begin{pmatrix} \mat{u}_0\kpr{d} & \mat{u}_1\kpr{d} & \mat{u}_2\kpr{d} & \cdots & \mat{u}_{N-1}\kpr{d} \end{pmatrix}^T.
\end{align*}
Essentially, MIMO Volterra system identification is in its most basic form solving the matrix equation \eqref{eq:bigsystem}. This immediately reveals the difficulty, as the size of the $\mat{U}$ matrix is $N \times (pM+1)^d$, which quickly becomes prohibitive even for SISO systems and moderate degree $d$. This motivates us to solve the following problem.
\begin{problem}
For a given set of measured time series $y_1,\ldots,y_l,u_1,\ldots,u_p$, memory $M$ and degree $d$, solve the 
matrix equation \eqref{eq:bigsystem} for $\ten{V}$ in the TN format.
\label{problem:sysidproblem}
\end{problem}
Before presenting the two numerical algorithms that solve Problem \ref{problem:sysidproblem}, we first give an upper bound on the rank of $\mat{U}$, together with some important implications.
\begin{lemma}
The rank of the matrix $\mat{U}$ is upper bounded by ${pM+d \choose pM}$.
\label{lemma:rankU}
\end{lemma}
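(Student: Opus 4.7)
The plan is to exploit the combinatorial symmetry inherent in the Kronecker powers $\mat{u}_t\kpr{d}$ that form the rows of $\mat{U}$. Each entry of $\mat{u}_t\kpr{d}$ is indexed by a $d$-tuple $(i_1,\ldots,i_d)\in\{1,\ldots,pM+1\}^d$ and equals the product $(\mat{u}_t)_{i_1}(\mat{u}_t)_{i_2}\cdots(\mat{u}_t)_{i_d}$. Because scalar multiplication is commutative, this value depends only on the multiset $\{i_1,\ldots,i_d\}$, not on the order of the indices. Consequently, two columns of $\mat{U}$ whose associated $d$-tuples are permutations of one another are literally identical, not merely linearly dependent.

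The first step is to make this identification formal: for each multiset of size $d$ drawn from $\{1,\ldots,pM+1\}$, I would choose a canonical representative tuple (for instance, the unique weakly increasing one) and observe that every column of $\mat{U}$ coincides with the column indexed by the canonical representative of its multiset. Second, I would count the number of such multisets. The number of multisets of size $d$ chosen from a set of $pM+1$ elements is the standard stars-and-bars count
\begin{align*}
\binom{(pM+1)+d-1}{d} = \binom{pM+d}{d} = \binom{pM+d}{pM}.
\end{align*}
Thus the $(pM+1)^d$ columns of $\mat{U}$ collapse into at most $\binom{pM+d}{pM}$ distinct ones.

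Finally, since the column rank of a matrix cannot exceed the number of its distinct columns, this bound transfers directly to $\textrm{rank}(\mat{U})$. I do not anticipate any real obstacle: the argument is combinatorial and relies solely on commutativity of the products appearing in $\mat{u}_t\kpr{d}$ together with the multiset counting formula. The only subtlety worth flagging in the writeup is the passage from ``many columns coincide'' to the rank bound, which is immediate once one notes that identical columns span the same one-dimensional subspace. An optional coda would observe that this bound is tight in generic cases, matching the dimension of the space of homogeneous polynomials of degree $d$ in $pM+1$ variables, but that is not required for the stated upper bound.
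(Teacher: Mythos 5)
Your proof is correct and takes essentially the same route as the paper, which simply states that each row of $\mat{U}$ contains only ${pM+d \choose pM}$ distinct entries; your writeup just makes the underlying reasoning explicit, namely that columns indexed by permuted $d$-tuples coincide by commutativity and that the multisets of size $d$ from $pM+1$ symbols number ${pM+d \choose pM}$ by stars and bars.
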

\begin{proof}
Each row of $\mat{U}$ consists of $(pM+1)^d$ entries formed by the repeated Kronecker product $\mat{u}_k\kpr{d}$. There are however only ${pM+d \choose pM}$ distinct entries.
\end{proof}
Lemma \ref{lemma:rankU} motivates us to define a particular set of inputs such that this upper bound is achieved.
\begin{definition}
A set of input signals $u_1,u_2,\ldots,u_p$ such that
\begin{align*}
\textrm{rank}(\mat{U}) &= {pM+d \choose pM}
\end{align*}
is called a set of persistent exciting inputs of order $d$.
\end{definition}
We will from here on always assume that the inputs are persistent exciting and that $N \geq \textrm{rank}(\mat{U})$. The truncated Volterra series for one particular output is a polynomial in $pM$ variables of degree $d$ and therefore has ${pM+d \choose pM}$ coefficients. A solution $\ten{V}_{(1)}$ of the 
matrix equation \eqref{eq:bigsystem} that consists of only $l{pM+d \choose pM}$ distinct entries therefore must correspond with $l$ symmetric tensors, where each column of $\ten{V}_{(1)}$ corresponds with one particular $d$-way symmetric tensor. As a result, persistent exciting inputs and sufficient number of samples $N$ together with Lemma \ref{lemma:rankU} has the following consequences:
\begin{enumerate}
\item the identification problem \eqref{eq:bigsystem} has an infinite number of solutions,
\item the $l$ unique minimal norm solutions of \eqref{eq:bigsystem} correspond with $l$ symmetric tensors,
\item the unicity of the symmetric solutions implies that no other symmetric solutions exist,
\item any solution $\ten{V}_{(1)}$ of \eqref{eq:bigsystem} can be turned into $l$ minimal norm solutions by symmetrizing each of the tensors given by the columns of $\ten{V}_{(1)}$.
\end{enumerate}
Ideally, one would solve \eqref{eq:bigsystem} for the minimal norm solutions, which acts as a regularization of the problem such that the solution is uniquely defined. The two iterative methods which we describe in the next subsections do not guarantee convergence to the minimal norm solutions. However, in practice we observe that the norms of the obtained solutions are quite close to the minimal ones.

\subsection{Alternating Linear Scheme method}
The first method that we derive is the Alternating Linear Scheme (ALS) method. The key idea of this method is to fix the TN-ranks $r_1,r_2,\ldots,r_{d-1}$ and choose a particular initial guess for $\ten{V}^{(1)},\ldots,\ten{V}^{(d)}$. Each of the TN-cores is then updated separately in an iterative fashion until convergence has been reached. Once the core $\ten{V}^{(d)}$ is updated, the algorithm ``sweeps" back towards $\ten{V}^{(1)}$ and so on. It turns out that updating one of the cores is equivalent with solving a much smaller linear system, which can be done very efficiently. Before describing the form of the reduced linear system, we first introduce the following notation
\begin{align*}
\mat{v}_{k-1} &:= (\ten{V}^{(1)} \times_2 \mat{u}_t^T)  \cdots (\ten{V}^{(k-1)} \times_2 \mat{u}_t^T) \in \mathbb{R}^{l \times r_{k-1}},\\
\mat{v}_{k+1} &:= (\ten{V}^{(k+1)} \times_2 \mat{u}_t^T)  \cdots (\ten{V}^{(d)} \times_2 \mat{u}_t^T) \in \mathbb{R}^{r_k \times 1}.
\end{align*}
\begin{theorem}
For outputs $y_1,\ldots,y_l$ described by a MIMO Volterra system \eqref{eq:simTT} we have that
\begin{align}
\mat{y}(t) &= (\mat{v}_{k+1} ^T \otimes \mat{u}_t^T \otimes \mat{v}_{k-1} )\, \textrm{vec}(\ten{V}^{(k)}).
\label{eq:ALSrow}
\end{align}
\label{theo:ALSreduced}
\end{theorem}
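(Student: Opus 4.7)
The plan is to start from the TN simulation formula \eqref{eq:simTT} and isolate the $k$th core, then vectorize using the two Kronecker identities \eqref{eq:allmodesmatrices} and \eqref{eq:vecofmatrixproduct} from the preliminaries. Concretely, I would first group the $d-1$ factors in \eqref{eq:simTT} surrounding the $k$th one according to the definitions of $\mat{v}_{k-1}$ and $\mat{v}_{k+1}$, so that
\begin{align*}
\mat{y}(t) = \mat{v}_{k-1}\,\bigl(\ten{V}^{(k)}\times_2 \mat{u}_t^T\bigr)\,\mat{v}_{k+1},
\end{align*}
where the middle factor is an $r_{k-1}\times r_k$ matrix and the dimensions $l\times r_{k-1}$, $r_{k-1}\times r_k$, $r_k\times 1$ are consistent.

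Next I would vectorize both sides. Since $\mat{y}(t)$ is a column vector, $\mathrm{vec}(\mat{y}(t))=\mat{y}(t)$, and identity \eqref{eq:vecofmatrixproduct} applied with $\mat{U}_1=\mat{v}_{k-1}$, $\mat{A}=\ten{V}^{(k)}\times_2 \mat{u}_t^T$, $\mat{U}_2^T=\mat{v}_{k+1}$ gives
\begin{align*}
\mat{y}(t)=(\mat{v}_{k+1}^T\otimes \mat{v}_{k-1})\,\mathrm{vec}\bigl(\ten{V}^{(k)}\times_2 \mat{u}_t^T\bigr).
\end{align*}
Then I would apply \eqref{eq:allmodesmatrices} to the 3-way tensor $\ten{V}^{(k)}$, interpreting the single mode-2 product as a full triple mode product with identities on modes 1 and 3, to obtain
\begin{align*}
\mathrm{vec}\bigl(\ten{V}^{(k)}\times_2 \mat{u}_t^T\bigr) = (I_{r_k}\otimes \mat{u}_t^T\otimes I_{r_{k-1}})\,\mathrm{vec}(\ten{V}^{(k)}).
\end{align*}

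Finally I would collapse the two Kronecker factors into one. Writing $\mat{v}_{k+1}^T\otimes \mat{v}_{k-1}=\mat{v}_{k+1}^T\otimes 1\otimes \mat{v}_{k-1}$ and using the mixed-product rule \eqref{eq:mixedproduct} with the factorization $\mat{v}_{k+1}^T=\mat{v}_{k+1}^T I_{r_k}$, $1=1\cdot \mat{u}_t^T$ (thought of as the scalar prefactor), $\mat{v}_{k-1}=\mat{v}_{k-1}I_{r_{k-1}}$, gives directly
\begin{align*}
(\mat{v}_{k+1}^T\otimes \mat{v}_{k-1})(I_{r_k}\otimes \mat{u}_t^T\otimes I_{r_{k-1}}) = \mat{v}_{k+1}^T\otimes \mat{u}_t^T\otimes \mat{v}_{k-1},
\end{align*}
which yields \eqref{eq:ALSrow}.

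The only real obstacle is keeping the order of Kronecker factors straight, since \eqref{eq:allmodesmatrices} reverses the mode ordering while \eqref{eq:vecofmatrixproduct} places $\mat{U}_2$ before $\mat{U}_1$; a careless application would swap $\mat{v}_{k+1}^T$ and $\mat{v}_{k-1}$. Everything else is a routine bookkeeping exercise once the middle factor $\ten{V}^{(k)}\times_2\mat{u}_t^T$ has been isolated.
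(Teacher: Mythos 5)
Your proposal is correct and follows essentially the same route as the paper's proof: isolate the $k$th core to write $\mat{y}(t)=\mat{v}_{k-1}(\ten{V}^{(k)}\times_2\mat{u}_t^T)\mat{v}_{k+1}$, apply \eqref{eq:vecofmatrixproduct}, expand the single mode-2 product with identity matrices on modes 1 and 3 via \eqref{eq:allmodesmatrices}, and collapse the Kronecker factors with the mixed-product rule \eqref{eq:mixedproduct}. The applications of the identities, including the ordering of the Kronecker factors, all match the paper's argument.
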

\begin{proof}
We first rewrite \eqref{eq:simTT} as
\begin{align*}
\mat{y}(t)&= \mat{v}_{k-1}  \, (\ten{V}^{(k)} \times_2 \mat{u}_t^T) \, \mat{v}_{k+1}.
\end{align*}
This equation holds since it is the product of the $l \times r_{k-1}$ matrix $\mat{v}_{k-1} $ with the $r_{k-1} \times r_k$ matrix $(\ten{V}^{(k)} \times_2 \mat{u}_t^T)$ with the $r_k \times 1$ vector $\mat{v}_{k+1} $, resulting in the $l \times 1$ vector $\mat{y}(t)$. We then have that
\begin{align*}
\mat{y}(t)&= \mat{v}_{k-1}  \; (\ten{V}^{(k)} \times_2 \mat{u}_t^T) \; \mat{v}_{k+1},\\
&= (\mat{v}_{k+1}^T \otimes \mat{v}_{k-1} ) \; \textrm{vec}(\ten{V}^{(k)} \times_2 \mat{u}_t^T),\\
&= (\mat{v}_{k+1}^T \otimes \mat{v}_{k-1} ) \; \textrm{vec}(\ten{V}^{(k)} \times_1 \mat{I}_{r_{k-1}} \times_2 \mat{u}_t^T \times_3 \mat{I}_{r_k}),\\
&= (\mat{v}_{k+1}^T \otimes \mat{v}_{k-1} )\; (\mat{I}_{r_k} \otimes \mat{u}_t^T \otimes \mat{I}_{r_{k-1}})\; \textrm{vec}(\ten{V}^{(k)}),\\
&= (\mat{v}_{k+1}^T \otimes 1 \otimes  \mat{v}_{k-1} )\; (\mat{I}_{r_k} \otimes \mat{u}_t^T \otimes \mat{I}_{r_{k-1}})\; \textrm{vec}(\ten{V}^{(k)}),\\
&= (\mat{v}_{k+1}^T \otimes \mat{u}_t^T \otimes \mat{v}_{k-1} )\; \textrm{vec}(\ten{V}^{(k)}),
\end{align*}
where the second equation is obtained from \eqref{eq:vecofmatrixproduct}, the fourth equation is obtained from using \eqref{eq:allmodesmatrices} and the final equation follows from \eqref{eq:mixedproduct}. This concludes the proof.
\end{proof}
The importance of Theorem \ref{theo:ALSreduced} lies in the fact that it describes how the reduced linear system to update $\ten{V}^{(k)}$ can be constructed. Indeed, suppose we want to update $\ten{V}^{(k)}$ and keep all other TN-cores fixed. By using \eqref{eq:ALSrow} for $t=0,\ldots,N-1$ the following linear system is obtained
\begin{align}
\textrm{vec}(\mat{Y}^T) &= \mat{U}_k \; \textrm{vec}(\ten{V}^{(k)}),
\label{eq:ALSsystem}
\end{align}
where $\mat{U}_k$ is a $lN \times r_{k-1}(pM+1)r_k$ matrix. Next to the obvious savings in computational complexity compared to solving \eqref{eq:bigsystem}, this effectively requires much fewer samples $N$ to perform the identification. Indeed, one only has to make sure that $lN \geq r_{k-1}(pM+1)r_k$. Computing the minimal norm solution of \eqref{eq:ALSsystem} requires the computation of the pseudo-inverse of $\mat{U}_k$, which requires $O(4Nl(r_{k-1}(pM+1)r_k)^2+8(r_{k-1}(pM+1)r_k)^3)$ computations.

Numerical stability of the ALS algorithm is guaranteed by the use of an orthogonalization step~\cite[p.~A690]{Holtz2012}. The key idea is that all TN-cores are initialized to be right orthogonal and are kept orthogonal during each step. After updating $\ten{V}^{(1)}$ by solving \eqref{eq:ALSsystem}, this tensor is reshaped into a $r_0(pM+1)\times r_1$ matrix from which a thin QR decomposition~\cite[p.~230]{matrixcomputations} is computed. This takes $O(2l(pM+1)r_1^2+2r_1^3/2)$ flops. The orthogonal matrix $\mat{Q}$ is then chosen as a new left orthogonal $\ten{V}^{(1)}$ TN-core. The corresponding $\mat{R}$ matrix is then absorbed into the $\ten{V}^{(2)}$ core by reshaping the core into a $r_1\times (pM+1)r_2$ matrix $\mat{V}_2$ and computing $\mat{R}\,\mat{V}_2$. Next, $\ten{V}^{(2)}$ is updated and orthogonalized, after which $\ten{V}^{(3)}$ is updated and so forth. When the iterative ``sweep" has reached the end of the TN, it reverses direction and in a similar way all cores are made right orthogonal by  QR decomposition. The whole algorithm is presented as pseudocode in Algorithm \ref{alg:ALS}. A Matlab/Octave implementation of Algorithm \ref{alg:ALS} can be freely downloaded from \url{https://github.com/kbatseli/MVMALS}.

\begin{alg}MIMO Volterra ALS Identification\\
\label{alg:ALS}
\textit{\textbf{Input}}: $N$ samples of inputs $u_1,\ldots,u_p$, outputs $y_1,\ldots,y_l$, degree $d$, memory $M$, TN ranks $r_1,\ldots,r_{d-1}$\\
\textit{\textbf{Output}}:\makebox[0pt][l]{ TN-cores $\ten{V}^{(1)},\ldots,\ten{V}^{(d)}$ that solve Problem \ref{problem:sysidproblem}}
\begin{algorithmic}
\State Initialize right orthogonal TN-cores $\ten{V}^{(1)},\ldots,\ten{V}^{(d)}$ of prescribed ranks
\While{termination criterion not satisfied}
\For{$i=1,\ldots,d-1$}
\State $\textrm{vec}(\ten{V}^{(i)}) \gets$ Compute and solve \eqref{eq:ALSsystem}
\State $\mat{V}_i \gets$ reshape($\ten{V}^{(i)},[r_{i-1}, (pM+1)r_i])$
\State Compute thin QR decomposition of $\mat{V}_i$
\State $\ten{V}^{(i)} \gets$ reshape($\mat{Q},[r_{i-1}, (pM+1), r_i])$
\State $\mat{V}_{i+1} \gets$ reshape($\ten{V}^{(i+1)},r_i,(pM+1)r_{i+1}])$
\State $\ten{V}^{(i+1)} \gets$ reshape($\mat{R}\mat{V}_{i+1},[r_{i}, (pM+1), r_{i+1}])$
\EndFor
\State Repeat the above loop in the reverse order
\EndWhile
\end{algorithmic}
\end{alg}
A few remarks on the ALS method are in order.
\begin{itemize}
\item The iterations can be terminated when the solution does not exhibit any further improvement or when a certain fixed maximal number of sweeps has been executed. In our implementation we set a tolerance on the relative residual $||\mat{Y}-\mat{\hat{Y}} ||_2/||\mat{Y}||_2$, where $\mat{\hat{Y}}$ is the simulated output from the obtained solution.
\item It is proved in \cite{Holtz2012,Rohwedder2013} that under certain conditions the ALS method enjoys strictly monotonous convergence. Convergence to the unique minimal norm solutions cannot be guaranteed however.
\item Also in \cite[p.~A701]{Holtz2012}, it is proved that the QR decomposition step ensures that the condition number of each $\mat{U}_k$ matrix in \eqref{eq:ALSsystem} is upper bounded by the condition number of the large $\mat{U}$ matrix. This ensures the numerical stability of the ALS method.
\end{itemize}

While the ALS method has a small computational complexity, it suffers from the problem that all TN-ranks need to be specified \textit{a priori}. Finding a good choice can be quite difficult when $d$ is large. This is the main motivation for the development of the Modified Alternating Linear Scheme (MALS) algorithm, which not only updates the TN-cores but also adapts the TN-ranks during each step. An additional benefit is that the TN-cores are guaranteed to be either left or right orthogonal so that no stabilizing QR step is required anymore. These benefits, however, come at the cost of a higher computational complexity.

\subsection{Modified Alternating Linear Scheme method}
The main idea of the MALS is in fact a simple one, namely, to update two TN-cores $\ten{V}^{(k)},\ten{V}^{(k+1)}$ at a time by considering them as one ``super-core" and keeping all other cores fixed. The updated super-core is then decomposed into one orthogonal and one non-orthogonal part, which are used as updates for both $\ten{V}^{(k)}$ and $\ten{V}^{(k+1)}$. This decomposition step is achieved by computing a singular value decomposition (SVD) and it is here where the TN-rank $r_k$ is updated. This updating procedure is repeated in the same sweeping fashion as with the ALS method until the solution has converged. We now derive the reduced linear system for computing a new super-core. The entries of the super-core $\ten{V}^{(k,k+1)}$ are defined as the contraction of $\ten{V}^{(k)}$ with $\ten{V}^{(k+1)}$ by summing over all possible $r_k$ values
\begin{align*}
\ten{V}^{(k,k+1)}_{i_1[i_2i_3]i_4} &= \sum_{j=1}^{r_k} \ten{V}^{(k)}_{i_1i_2j} \ten{V}^{(k+1)}_{ji_3i_4}.
\end{align*}
The square brackets around $i_2i_3$ indicate that these indices need to be interpreted as one single multi-index such that $\ten{V}^{(k,k+1)}$ is an $r_{k-1} \times (pM+1)^2 \times r_{k+1}$ 3-way tensor. Observe that the formation of the super-core has removed all information on $r_k$. It is now straightforward to verify that
\begin{align*}
(\ten{V}^{(k)} \times_2 \mat{u}_t^T) (\ten{V}^{(k+1)} \times_2 \mat{u}_t^T)  &= \ten{V}^{(k,k+1)} \times_2 (\mat{u}_t^T)\kpr{2}
\end{align*}
holds. Using the same notation as in the ALS method we now derive the reduced linear system for the MALS.
\begin{theorem}
For outputs $y_1,\ldots,y_l$ described by a MIMO Volterra system \eqref{eq:simTT} we have that
\begin{align}
\mat{y}(t) &= (\mat{v}_{k+2} ^T \otimes (\mat{u}_t^T)\kpr{2} \otimes \mat{v}_{k-1} )\, \textrm{vec}(\ten{V}^{(k,k+1)}).
\label{eq:MALSrow}
\end{align}
\label{theo:MALSreduced}
\end{theorem}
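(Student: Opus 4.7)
The plan is to mirror the proof of Theorem \ref{theo:ALSreduced}, with the single core $\ten{V}^{(k)}$ replaced by the super-core $\ten{V}^{(k,k+1)}$ and the input contraction $\mat{u}_t^T$ replaced by the doubled Kronecker contraction $(\mat{u}_t^T)\kpr{2}$. The key observation, already noted in the excerpt just before the theorem, is that
\begin{align*}
(\ten{V}^{(k)} \times_2 \mat{u}_t^T)(\ten{V}^{(k+1)} \times_2 \mat{u}_t^T) = \ten{V}^{(k,k+1)} \times_2 (\mat{u}_t^T)\kpr{2},
\end{align*}
so the super-core absorbs the two middle contractions into a single $r_{k-1} \times r_{k+1}$ matrix.

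First I would use \eqref{eq:simTT} together with the definitions of $\mat{v}_{k-1}$ and $\mat{v}_{k+2}$ to rewrite
\begin{align*}
\mat{y}(t) = \mat{v}_{k-1}\,\bigl(\ten{V}^{(k)} \times_2 \mat{u}_t^T\bigr)\bigl(\ten{V}^{(k+1)} \times_2 \mat{u}_t^T\bigr)\,\mat{v}_{k+2},
\end{align*}
which is legitimate as a product of an $l \times r_{k-1}$ matrix, an $r_{k-1} \times r_k$ matrix, an $r_k \times r_{k+1}$ matrix, and an $r_{k+1} \times 1$ vector. Substituting the super-core identity collapses the middle pair into $\ten{V}^{(k,k+1)} \times_2 (\mat{u}_t^T)\kpr{2}$, giving
\begin{align*}
\mat{y}(t) = \mat{v}_{k-1}\,\bigl(\ten{V}^{(k,k+1)} \times_2 (\mat{u}_t^T)\kpr{2}\bigr)\,\mat{v}_{k+2}.
\end{align*}

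Next I would apply \eqref{eq:vecofmatrixproduct} to the outer matrix product, yielding
\begin{align*}
\mat{y}(t) = \bigl(\mat{v}_{k+2}^T \otimes \mat{v}_{k-1}\bigr)\,\textrm{vec}\bigl(\ten{V}^{(k,k+1)} \times_2 (\mat{u}_t^T)\kpr{2}\bigr).
\end{align*}
Writing the remaining contraction as $\ten{V}^{(k,k+1)} \times_1 \mat{I}_{r_{k-1}} \times_2 (\mat{u}_t^T)\kpr{2} \times_3 \mat{I}_{r_{k+1}}$ and applying \eqref{eq:allmodesmatrices} produces
\begin{align*}
\mat{y}(t) = \bigl(\mat{v}_{k+2}^T \otimes \mat{v}_{k-1}\bigr)\bigl(\mat{I}_{r_{k+1}} \otimes (\mat{u}_t^T)\kpr{2} \otimes \mat{I}_{r_{k-1}}\bigr)\textrm{vec}(\ten{V}^{(k,k+1)}).
\end{align*}
Inserting the harmless factor $1$ to write $\mat{v}_{k+2}^T \otimes \mat{v}_{k-1} = \mat{v}_{k+2}^T \otimes 1 \otimes \mat{v}_{k-1}$ and then invoking the mixed-product property \eqref{eq:mixedproduct} three times consolidates the two Kronecker factors into $\mat{v}_{k+2}^T \otimes (\mat{u}_t^T)\kpr{2} \otimes \mat{v}_{k-1}$, which is exactly \eqref{eq:MALSrow}.

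The only real obstacle is dimension bookkeeping: the multi-index $[i_2 i_3]$ that defines the super-core must be lined up with the row-by-row vectorization convention used in \eqref{eq:vecofmatrixproduct} and with the reversed Kronecker ordering of \eqref{eq:allmodesmatrices}, so that $(\mat{u}_t^T)\kpr{2}$ appears in the middle slot with the correct index ordering. Once that bookkeeping is handled consistently, the proof is a direct parallel of Theorem \ref{theo:ALSreduced} and no genuinely new ingredient is required.
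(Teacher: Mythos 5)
Your proposal is correct and takes essentially the same route as the paper: the paper's proof consists precisely of the rewriting $\mat{y}(t)= \mat{v}_{k-1}\,(\ten{V}^{(k,k+1)} \times_2 (\mat{u}_t^T)\kpr{2})\,\mat{v}_{k+2}$ followed by the remark that the rest is identical to the ALS case, which is exactly the chain of identities \eqref{eq:vecofmatrixproduct}, \eqref{eq:allmodesmatrices} and \eqref{eq:mixedproduct} you spell out. Your version simply makes explicit the steps the paper leaves implicit.
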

\begin{proof}
By rewriting \eqref{eq:simTT} this time as
\begin{align*}
\mat{y}(t)&= \mat{v}_{k-1}  \, (\ten{V}^{(k,k+1)} \times_2 (\mat{u}_t^T)\kpr{2}) \, \mat{v}_{k+2},
\end{align*}
it can be clearly seen that the rest of the proof is now identical to the ALS case.
\end{proof}
Application of Theorem \ref{theo:MALSreduced} for $t=0,\ldots,N-1$ now results in the reduced linear system
\begin{align}
\textrm{vec}(\mat{Y}^T) &= \mat{U}_{k,k+1}\; \textrm{vec}(\ten{V}^{(k,k+1)}),
\label{eq:MALSsystem}
\end{align}
where $\mat{U}_{k,k+1}$ is an $lN \times r_{k-1}(pM+1)^2r_{k+1}$ matrix. The minimal norm solution of \eqref{eq:MALSsystem} can by computed in $O(4Nl(r_{k-1}(pM+1)^2r_{k+1})^2+8(r_{k-1}(pM+1)^2r_{k+1})^3)$ flops.

Once the new super-core has been computed, it can be reshaped into an $r_{k-1}(pM+1) \times (pM+1)r_{k+1}$ matrix $\mat{V}_{k,k+1}$. The SVD of $\mat{V}_{k,k+1}$ is
\begin{align*}
\mat{V}_{k,k+1} &= \mat{U} \; \mat{S} \; \mat{V}^T,
\end{align*}
where $\mat{U},\mat{V}$ are orthogonal matrices and $\mat{S}$ is a diagonal matrix with positive entries $s_1 \geq \ldots \geq s_q$, with $q=\textrm{min}(r_{k-1}(pM+1),(pM+1)r_{k+1})$. Its computation requires $O(4(r_{k-1}^2(pM+1)^3r_{k+1})+8(r_{k-1}(pM+1)^3r_{k+1})^2+9((pM+1)^3r_{k+1}^3)$ flops. The numerical rank $r_k$ can be determined from a given tolerance $\tau$ such that $s_1 \geq \cdots \geq s_{r_k} \geq \tau \geq \cdots \geq s_q$. If the sweep is going from left to right then we update $\ten{V}^{(k)},\ten{V}^{(k+1)}$ as
\begin{align*}
\ten{V}^{(k)} &:= \textrm{reshape}(\mat{U},[r_{k-1},(pM+1),r_k]),\\
\ten{V}^{(k+1)} &:= \textrm{reshape}(\mat{S}\mat{V}^T,[r_{k},(pM+1),r_{k+1}]),
\end{align*}
which makes $\ten{V}^{(k)}$ left orthogonal. If the sweep is going from right to left then the updates are
\begin{align*}
\ten{V}^{(k)} &:= \textrm{reshape}(\mat{U}\mat{S},[r_{k-1},(pM+1),r_k]),\\
\ten{V}^{(k+1)} &:= \textrm{reshape}(\mat{V}^T,[r_{k},(pM+1),r_{k+1}]),
\end{align*}
where now $\ten{V}^{(k+1)}$ is right orthogonal. The pseudocode for the MALS algorithm is given in Algorithm \ref{alg:MALS}. A Matlab/Octave implementation of Algorithm \ref{alg:MALS} can also be freely downloaded from \url{https://github.com/kbatseli/MVMALS}.

\begin{alg}MIMO Volterra MALS Identification\\
\label{alg:MALS}
\textit{\textbf{Input}}: $N$ samples of inputs $u_1,\ldots,u_p$, outputs $y_1,\ldots,y_l$, degree $d$, memory $M$\\
\textit{\textbf{Output}}:\makebox[0pt][l]{ TN-cores $\ten{V}^{(1)},\ldots,\ten{V}^{(d)}$ that solve Problem \ref{problem:sysidproblem}}
\begin{algorithmic}
\State Initialize right orthogonal TN-cores $\ten{V}^{(1)},\ldots,\ten{V}^{(d)}$ of ranks 1
\While{termination criterion not satisfied}
\For{$i=1,\ldots,d-1$}
\State $\textrm{vec}(\ten{V}^{(i,i+1)}) \gets$ Compute and solve \eqref{eq:MALSsystem}
\State $\mat{V}_{i,i+1} \gets$ reshape($\ten{V}^{(i,i+1)},[r_{i-1}(pM+1),(pM+1)r_{i+1}])$
\State Compute the SVD of $\mat{V}_{i,i+1}$
\State Determine numerical rank $r_i$
\State $\ten{V}^{(i)} \gets$ reshape($\mat{U},[r_{i-1},(pM+1),r_i])$
\State $\ten{V}^{(i+1)} \gets$ reshape($\mat{S}\mat{V}^T,[r_i,(pM+1),r_{i+1}])$
\EndFor
\State Repeat the above loop in the reverse order
\EndWhile
\end{algorithmic}
\end{alg}
The same remarks as in the ALS case apply. The tolerance $\tau$ for the determination of the numerical rank $r_k$ can be chosen such that the error is below a certain threshold. In our implementation we opted for the default choice in Matlab/Octave, which is $\tau  = \epsilon\, s_1\, \textrm{max}(r_{i-1}(pM+1),(pM+1)r_{i+1})$, where $\epsilon$ is the machine precision. If the tolerance on the obtained solution is set too low then the MALS algorithm will have the tendency to increase the TN-ranks to very high values, resulting in higher computational complexity. However, it does not make sense in system identification to require that the solution interpolates the measured output to a high accuracy (say up to the machine precision), which is essentially overfitting.

\section{Numerical Experiments}
\label{sec:experiments}
In this section we demonstrate the two proposed identification algorithms. All computations were done on an Intel i5 quad-core processor running at 3.3 GHz with 16 GB RAM. We are not aware of any other publicly available algorithms that are able to identify high-degree MIMO Volterra systems, say, at $d=10$.
\subsection{Decaying multi-dimensional exponentials}
\label{ex:decay}
\label{ex:decayexp}
First, we demonstrate the validity of Algorithms \ref{alg:ALS} and \ref{alg:MALS} by means of an artificial SISO example. Symmetric Volterra kernels were generated up to degree $d=10$ for a fixed memory $M=7$ and containing exponentially decaying coefficients. The $i$th symmetric Volterra kernel $h_i$ contains the entries
\begin{align*}
h_i(k_1,\ldots,k_i) =\exp{(-k_1^2 - k_2^2 - \cdots -k_i^2)}.
\end{align*}
Each of the $k_i$ indices attain the values $0,0.1,0.2,\ldots,0.6$. For each degree $d$ a random input signal of 5000 samples, uniformly distributed over the interval $[0,1]$, was generated. The Volterra kernel was estimated by solving \eqref{eq:bigsystem} directly using the pseudoinverse of $\mat{U}$ where possible. The MALS algorithm was used to determine a solution for which $||\mat{y}-\mat{\hat{y}} ||_2 / ||\mat{y}||_2 < \num{1e-4}$ was satisfied. The TT-ranks determined by the MALS algorithm were then used to determine a solution with the same relative accuracy using the ALS method. Both the MALS and ALS algorithms always ran with the first 700 samples of the input and output. Table~\ref{tbl:exp1} lists the run times in seconds for the three methods, the maximal TT-rank and the total number of identified Volterra kernel elements for each degree. Starting from degree $d=5$ it was not possible anymore to obtain the Volterra tensor directly since this would require the inversion of a $32768 \times 32768$ matrix. From $d=3$, the maximal TT-rank stabilizes to 8 for all TT-cores. Even though the ALS method exhibits low computational complexity, its convergence is much slower than MALS, resulting in larger run times. The obtained solutions were validated by simulating the output using the 4300 remaining input points. Fig.~\ref{fig:ex1validation} shows both the real and simulated output from the MALS solution for samples 1640 up to 1700. The two output signals are almost indistinguishable. No difference between the ALS and MALS solutions could be observed.

\begin{table}[ht]
\begin{center}
\caption{Run times, maximal TT-rank and number of estimated Volterra tensor elements for an increasing degree $d$.}
\begin{tabular}{@{}rrrrrr@{}}
$d$ & \multicolumn{3}{c}{Run time [seconds]} & max TT-rank & $(pM+1)^d$\\ \midrule
& $\mat{U}^{\dagger}\mat{y}$ & ALS & MALS &  & \\\midrule
2 &  0.017& 0.240 & 0.077 & 6 & 64\\
3 &  0.253& 0.223 & 0.251 & 8 & 512\\
4 &  39.36& 1.771 & 0.415 & 8 & 4096\\
5 &  NA& 4.288& 0.587 & 8 & 32768\\
6 &  NA& 3.065& 0.791 & 8 & 262144\\
7 &  NA& 6.783& 0.961 & 8 & 2097152\\
8 &  NA& 13.11& 1.199 & 8 & 16777216\\
9 &  NA& 14.42& 1.384 & 8 & 134217728\\
10&  NA& 18.37& 1.576 & 8 &\num{1.0737e+9}
\end{tabular}
\label{tbl:exp1}
\end{center}
\end{table}

\begin{figure}[t]
\centering 
\includegraphics[width=3.5in]{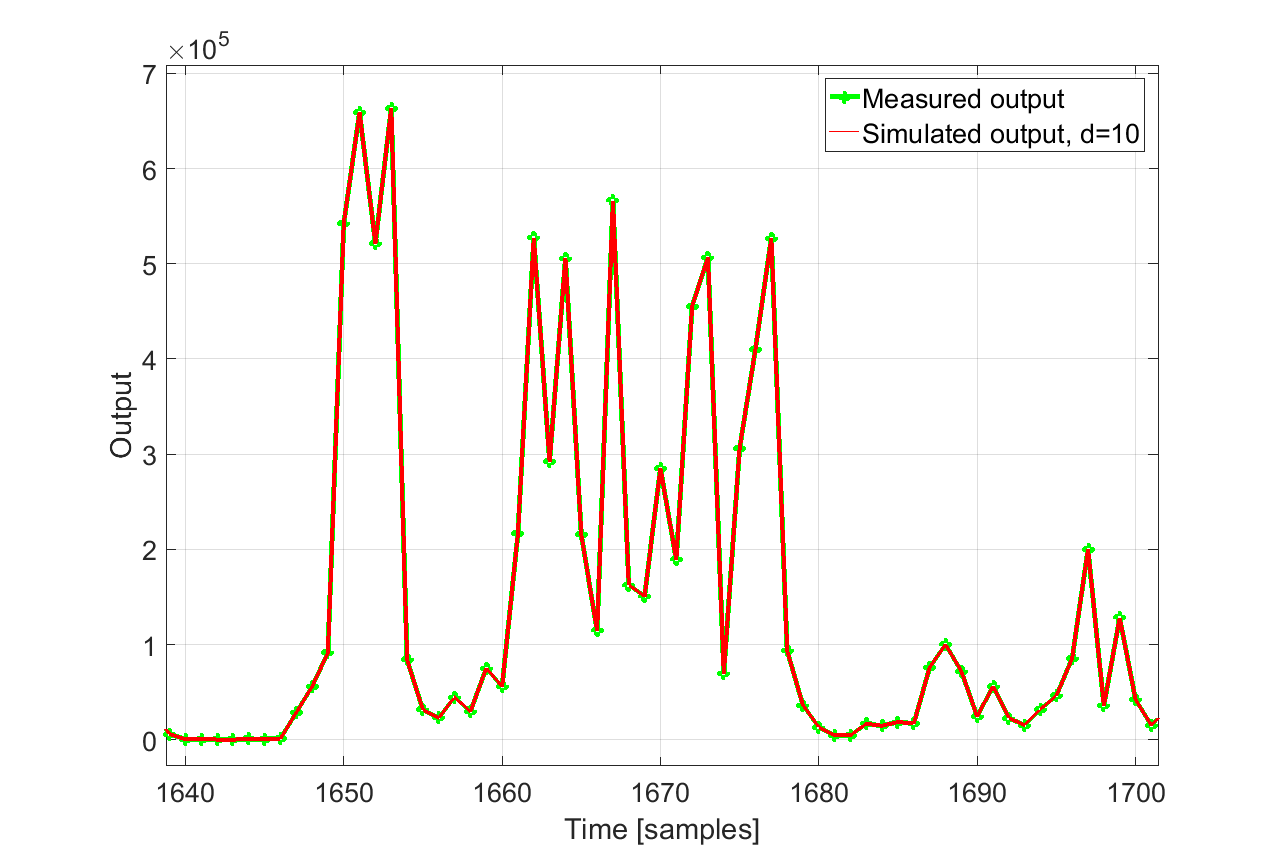}
\caption{Real and simulated output from the MALS solution with $d=10,M=7$.}
\label{fig:ex1validation}
\end{figure}

\subsection{Double balanced mixer}
In this example we consider a double balanced mixer used for upconversion. The output radio-frequency (RF) signal is determined by a 100Hz sine low-frequency (LO) signal and a 300Hz square-wave intermediate-frequency (IF) signal. A phase difference of $\pi/8$ is present between the LO and IF signals. All time series were sampled at 5 kHz for 1 second. We investigate the effect of additive output noise on the identified models. We define 5 different noise levels which are added to the measured RF output, generating signals with signal-to-noise (SNR) ratios ranging from 11dB up to 25dB. The first 700 samples of the inputs and the noisy output are then used to identify an $M=2,d=11$ two-input one-output Volterra system using the ALS method. The Volterra kernel consists in this case of 9765625 entries. The TT-ranks are all fixed to $2M+1=5$. The identified models were then used to simulate the remaining 4300 samples of the output. The SNR of the simulated output was computed by comparing the simulated output with the original noiseless output. Table~\ref{table:mixerALS} lists the SNR of the signals used in the identification (ID SNR), the relative residual of the simulated output, the run time of the identification in seconds and the SNR of the simulated signal (SIM SNR). As expected, a gradual improvement of the relative residual can be seen as the SNR of the signals used for identification increases. Although the residual remains high throughout the different SNR levels, the SNR of the simulated output is much better, with a consistent increase of 11dB. The run time varies between 2 and 6 seconds. Fig.~\ref{fig:ex2sim} shows the simulated output on the validation data for three Volterra models identified under three different SNR levels (11dB, 16dB, 25 dB). 

Next, the MALS algorithm was run on the same data to also identify $M=2,d=11$ Volterra models. Based on the ALS results we set the tolerance on the relative residual to $0.5$. This resulted in all TT-ranks being 5 for all cases. Table~\ref{table:mixerMALS} lists the SNR of the signals used in the identification (ID SNR), the relative residual of the simulated output, the run time of the identification in seconds and the SNR of the simulated signal (SIM SNR). The relative residuals and SNRs of the simulated output are very close to the results obtained by the ALS identification. Just as in Example \ref{ex:decay}, MALS is able to finish the identification faster than the ALS method. Furthermore, the run time does not vary as much as in the ALS case. Lowering the tolerance for the data with low SNR resulted in the MALS method increasing the TT ranks significantly, up to the point that the 5000 samples were not sufficient anymore. This indicates a tendency of the MALS method to overfit. 
\begin{table}[ht]
\centering
\caption{ALS identification for 5 different SNR levels.}
\label{table:mixerALS} 
\begin{tabular}{@{}l|rrrrr@{}}
ID SNR  & 11dB & 13dB & 16dB & 19dB & 25dB \\ \midrule
$\frac{||\mat{y}-\mat{\hat{y}} ||_2}{||\mat{y}||_2}$ &$.255$ & $.208$ & $.151$& $.105$ & $.052$\\ \midrule
Run time & 2.3s & 5.3s & 6.4s & 3.2s & 2.2s\\ \midrule
SIM SNR  & 22dB & 24dB & 27dB & 30dB & 37dB \\
\end{tabular}
\end{table}

\begin{figure}[t]
\centering 
\includegraphics[width=3.5in]{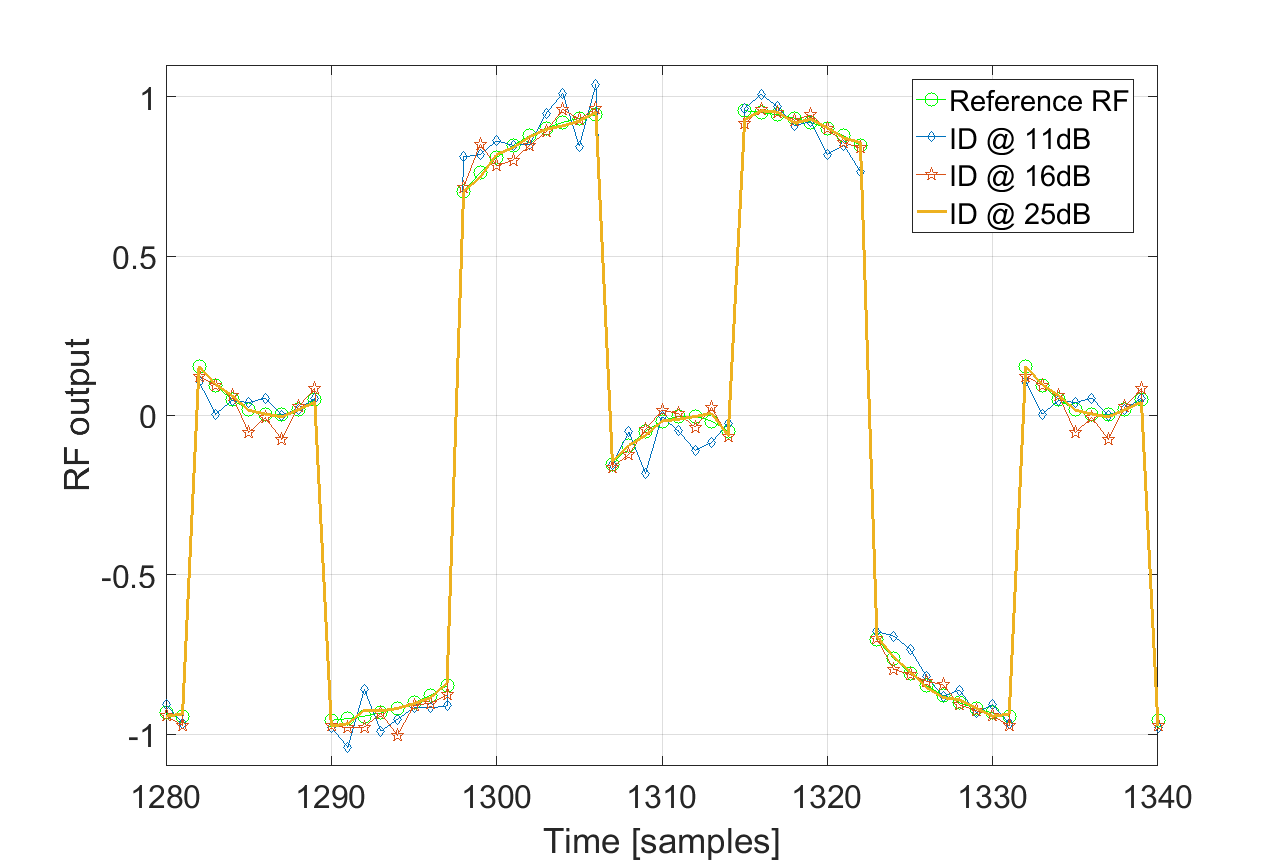}
\caption{Real and simulated output from the ALS identified models under different SNRs.}
\label{fig:ex2sim}
\end{figure}

\begin{table}[ht]
\centering
\caption{MALS identification for 5 different SNR levels with tolerance of $0.5$.}
\label{table:mixerMALS} 
\begin{tabular}{@{}l|rrrrr@{}}
ID SNR  & 11dB & 13dB & 16dB & 19dB & 25dB \\ \midrule
$\frac{||\mat{y}-\mat{\hat{y}} ||_2}{||\mat{y}||_2}$ &$.251$ & $.214$ & $.152$& $.105$ & $.055$\\ \midrule
Run time & 1.3s & 1.1s & 1.1s & 1.1s & 1.1s\\ \midrule
SIM SNR  & 24dB & 25dB & 28dB & 31dB & 37dB \\
\end{tabular}
\end{table}

\section{Conclusions}
This article presented two new and remarkably efficient identification algorithms for high-order MIMO Volterra systems. The identification problem was rephrased in terms of the Volterra tensor, which is never explicitly constructed but instead always stored in the highly economic TT format. Both proposed identification algorithms are iterative, starting from an initial orthogonal guess for the TT-cores and updating them until a desired accuracy is acquired. The algorithms are guaranteed to monotonically converge and numerical stability is ensured by retaining orthogonality in the TT-cores. The efficiency of both identification algorithms was demonstrated by numerical examples, where reliable MIMO Volterra systems of degrees 10 were estimated in only a few seconds. Even though its computational complexity is lower, the ALS method was found to converge slower than the MALS algorithm when producing solutions with the same accuracy. The MALS method showed a tendency to increase the TT-ranks under the presence of high noise levels. Extending the robustness of these methods to noisy data will be the subject of further research.

\bibliographystyle{plain}        
\bibliography{references}           

\begin{thebibliography}{10}

\bibitem{batselier2016iccad}
K.~Batselier, Z.~Chen, H.~Lui, and N.~Wong.
\newblock {A tensor-based Volterra series black-box nonlinear system
  identification and simulation framework}.
\newblock In {\em 2016 IEEE/ACM International Conference on Computer-Aided
  Design (ICCAD)}, 2016.

\bibitem{Campello2004}
R.~J. G.~B. Campello, G.~Favier, and W.~C. do~Amaral.
\newblock {Optimal expansions of discrete-time Volterra models using Laguerre
  functions}.
\newblock {\em Automatica}, 40(5):815 -- 822, 2004.

\bibitem{candecomp}
J.~D. Carroll and J.~J. Chang.
\newblock Analysis of individual differences in multidimensional scaling via an
  n-way generalization of ``{E}ckart-{Y}oung'' decomposition.
\newblock {\em Psychometrika}, 35(3):283--319, 1970.

\bibitem{cheng2001optimal}
C.H. Cheng and E.J. Powers.
\newblock {Optimal Volterra kernel estimation algorithms for a nonlinear
  communication system for PSK and QAM inputs}.
\newblock {\em IEEE Transactions on Signal Processing}, 49(1):147--163, 2001.

\bibitem{Lim2008}
V.~de~Silva and L.H. Lim.
\newblock Tensor rank and the ill-posedness of the best low-rank approximation
  problem.
\newblock {\em SIAM J. on Matrix Anal. Appl.}, 30(3):1084--1127, 2008.

\bibitem{Diouf2012}
C.~Diouf, M.~Telescu, P.~Cloastre, and N.~Tanguy.
\newblock {On the Use of Equality Constraints in the Identification of
  Volterra-Laguerre Models}.
\newblock {\em IEEE Signal Processing Letters}, 19(12):857--860, Dec 2012.

\bibitem{FKB:12}
G.~Favier, A.~Y. Kibangou, and T.~Bouilloc.
\newblock Nonlinear system modeling and identification using {Volterra-PARAFAC}
  models.
\newblock {\em Int. J. Adapt. Control Signal Process}, 26(1):30--53, January
  2012.

\bibitem{fj2012identification}
Doyle FJ~III, Ronald~K Pearson, and Babatunde~A Ogunnaike.
\newblock {\em {Identification and control using Volterra models}}.
\newblock Springer Science \& Business Media, 2012.

\bibitem{matrixcomputations}
G.H. Golub and C.F. Van~Loan.
\newblock {\em Matrix Computations}.
\newblock The Johns Hopkins University Press, 3rd edition, October 1996.

\bibitem{harshman1970fpp}
R.~A. Harshman.
\newblock {Foundations of the PARAFAC procedure: Models and conditions for an
  ``explanatory'' multi-modal factor analysis}.
\newblock {\em UCLA Working Papers in Phonetics}, 16(1):84, 1970.

\bibitem{Holtz2012}
S.~Holtz, T.~Rohwedder, and R.~Schneider.
\newblock The alternating linear scheme for tensor optimization in the tensor
  train format.
\newblock {\em SIAM Journal on Scientific Computing}, 34(2):A683--A713, 2012.

\bibitem{Hastad1990}
J.~H\r{a}stad.
\newblock Tensor rank is {NP}-complete.
\newblock {\em Journal of Algorithms}, 11(4):644--654, 1990.

\bibitem{Kajikawa2008}
Y.~Kajikawa.
\newblock {Subband parallel cascade Volterra filter for linearization of
  loudspeaker systems}.
\newblock In {\em 2008 16th European Signal Processing Conference}, pages 1--5,
  Aug 2008.

\bibitem{katayama2005subspace}
T.~Katayama.
\newblock {\em Subspace Methods for System Identification}.
\newblock Communications and Control Engineering. Springer London, 2005.

\bibitem{tensorreview}
T.~Kolda and B.~Bader.
\newblock Tensor decompositions and applications.
\newblock {\em SIAM Review}, 51(3):455--500, 2009.

\bibitem{korenberg1996identification}
M.J. Korenberg and I.W. Hunter.
\newblock {The identification of nonlinear biological systems: Volterra kernel
  approaches}.
\newblock {\em Annals of biomedical engineering}, 24(2):250--268, 1996.

\bibitem{Mumolo1993}
E.~Mumolo and D.~Francescato.
\newblock Adaptive predictive coding of speech by means of {Volterra}
  predictors.
\newblock In {\em IEEE Winter Workshop on Nonlinear Digital Signal Processing,
  1993}, pages 2.1.4.1--2.1.4.4, 1993.

\bibitem{Orus2013}
R.~Orus.
\newblock {A Practical Introduction to Tensor Networks: Matrix Product States
  and Projected Entangled Pair States}.
\newblock {\em Annals Phys.}, 349:117--158, 2014.

\bibitem{ivanTT}
I.~Oseledets.
\newblock Tensor-train decomposition.
\newblock {\em SIAM Journal on Scientific Computing}, 33(5):2295--2317, 2011.

\bibitem{Oseledets2011}
I.~V. Oseledets.
\newblock {DMRG} approach to fast linear algebra in the {TT}--format.
\newblock {\em Comput. Meth. Appl. Math.}, 11(3):382--393, 2011.

\bibitem{Rohwedder2013}
T.~Rohwedder and A.~Uschmajew.
\newblock On local convergence of alternating schemes for optimization of
  convex problems in the tensor train format.
\newblock {\em SIAM Journal on Numerical Analysis}, 51(2):1134--1162, 2013.

\bibitem{Tan2001}
L.~Tan and J.~Jiang.
\newblock {Adaptive Volterra filters for active control of nonlinear noise
  processes}.
\newblock {\em IEEE Transactions on Signal Processing}, 49(8):1667--1676, Aug
  2001.

\bibitem{tuckerreview}
L.~R. Tucker.
\newblock Some mathematical notes on three-mode factor analysis.
\newblock {\em Psychometrika}, 31(3):279--311, 1966.

\bibitem{Wambacq1998}
P.~Wambacq and W.M. Sansen.
\newblock {\em Distortion Analysis of Analog Integrated Circuits}.
\newblock Kluwer Academic Publishers, Norwell, MA, USA, 1998.

\bibitem{White1992}
S~R. White.
\newblock Density matrix formulation for quantum renormalization groups.
\newblock {\em Phys. Rev. Lett.}, 69:2863--2866, Nov 1992.

\bibitem{wiener2013nonlinear}
N.~Wiener, J.A. Stratton, and M.I.O. Technology.
\newblock {\em Nonlinear Problems in Random Theory}.
\newblock Literary Licensing, LLC, 2013.

\end{thebibliography}



\end{document}